\newtheorem{theorem}{Theorem}[section]
\newtheorem{lemma}[theorem]{Lemma}
\newtheorem{corollary}[theorem]{Corollary}
\newtheorem{proof}{\textmd{\textit{Proof.}}}
\newtheorem{remark}[theorem]{Remark}
\newtheorem{acknowledgement}{\textmd{\textit{Acknowledgements.}}}
\newcommand{\qedd}{\hfill \Box}
\newcommand{\lra}{\longrightarrow}
\newcommand{\wt}{\widetilde}
\newcommand{\B}{\ensuremath{\mathbb{B}}}
\newcommand{\R}{\ensuremath{\mathbb{R}}}
\newcommand{\bH}{\ensuremath{\mathbb{H}}}
\newcommand{\Sph}{\ensuremath{\mathbb{S}}}
\def\vol{\mathop{\mathrm{vol}}\nolimits}
\title{
\Large{
On sufficient conditions to extend 
Huber's finite\\ connectivity theorem to higher dimensions}\footnote{
2010 Mathematics Subject Classification: 
Primary 53C20, 53C21;
Secondary 53C22, 53C23.}
\footnote{Key words and phrases: 
end, finite topological type, radial curvature, total curvature.}\\
{\small {\it Dedicated to Professor K. Shiohama on his eightieth birthday}}
}
\author{Kei KONDO\footnote{Supported by the JSPS KAKENHI Grant Numbers 17K05220, and partially 16K05133, 18K03280.} \   
and Yusuke SHINODA
}
\date{\today}
\begin{document}
\maketitle

\begin{abstract}
Let $M$ be a connected complete 
noncompact $n$-dimensional Riemannian manifold 
with a base point $p \in M$ whose radial sectional curvature 
at $p$ is bounded from below by that of a noncompact surface 
of revolution which admits a finite total curvature where 
$n \ge 2$. 
Note here that our radial curvatures can change signs wildly. 
We then show that 
$\lim_{t\to\infty}\vol B_t(p) / t^n$ exists 
where $\vol B_t(p)$ denotes the volume of 
the open metric ball $B_t(p)$ with center $p$ and radius $t$. 
Moreover we show that in addition if the limit above 
is positive, then $M$ has finite topological 
type and there is therefore a finitely upper bound on the 
number of ends of $M$. 
\end{abstract}

\section{Introduction}\label{sec1}

It is a great delight to try generalizing classic results of  
the relationships between a total curvature and topology 
of a connected complete noncompact $2$-dimensional 
Riemannian manifold $S$ to higher dimensions. The reason for our joy is largely due to a well-known theorem of 
Cohn-Vossen \cite{CV1} in 1935 which states that 
if $S$ is {\em finitely connected} and admits a total 
curvature $c(S)$ as an extended real number, 
then $c(S)\le 2\pi \chi (S)$ holds 
where $\chi (S)$ denotes the 
Euler characteristic of $S$, and hence $c(S)$ 
is not a topological invariant anymore. 
Here $S$ is finitely connected 
if there is a compact $2$-dimensional 
Riemannian manifold $V$ and finite numbers of points 
$p_1, p_2, \ldots, p_k \in V$ ($k\ge1$) 
such that $S$ is homeomorphic to 
$V \setminus \{p_1, p_2, \ldots, p_k\}$. 
Besides, Cohn-Vossen showed, applying the theorem above, that if $S$ has nonnegative 
Gaussian curvature everywhere, 
then $S$ is either diffeomorphic to a plane or else 
isometric to a flat cylinder or a flat open M\"obius strip. As one of extensions of Cohn-Vossen's 
theorem above to higher dimensions, Cheeger and Gromoll 
\cite{CG2} proved the soul theorem in 1972, 
which states that 
for every complete noncompact Riemannian 
manifold $X$ with nonnegative sectional curvature everywhere there is a compact totally geodesic submanifold, called the soul, of $X$ such that $X$ is diffeomorphic to the normal bundle over the soul.\par 
Huber \cite{Hub} showed in 1957 that if $S$ admits 
a finite total curvature, which implies 
$c(S)>-\infty$ by Cohn-Vossen's 
theorem above, then $S$ is finitely connected. 
As a weak version of the soul theorem, 
Gromov \cite{G} showed, applying the Grove-Shiohama 
theory \cite{GS} of critical points of distance functions, 
in 1981 that the $X$ above {\em has finite topological type}, i.e., 
$X$ is homeomorphic to the interior of a compact manifold with boundary and consequently, 
we can also see his result as an extension of Huber's 
theorem. What should be noted here is the 
fact that $X$ must employ the Euclidean space 
of dimension $2$, denoted by $\R^2$, 
as a reference surface in comparison theorems such as 
the Toponogov comparison theorem. The total 
curvature of $\R^2$ is thus $0$, but 
Huber's theorem also 
holds for all $S$ whose total curvatures take 
a finite value in $(-\infty, 2\pi]$ other 
than $0$.\par  
It is a meaningful observation on the 
geometry of total curvatures that we 
restrict $c(S)$ from being bounded from below by a constant, because $c(S)$ is not a topological invariant. 
For example the following theorem of Shiohama is remarkable about the restriction. 

\begin{theorem}{\rm (\cite{Sh})}\label{Sh}
Let $S$ be oriented, finitely connected, and have 
one end. If $c(S) > (2 \chi (S) - 1) \pi$, 
then all Busemann functions on $S$ are exhaustions. 
In particular, if $c(S) > \pi$, 
then $S$ is homeomorphic to $\R^2$ 
and also all Busemann functions on $S$ 
are exhaustions. 
\end{theorem}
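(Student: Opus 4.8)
The plan is to first notice that the hypotheses already pin $S$ down to having \emph{finite} total curvature: Cohn-Vossen's inequality gives $c(S)\le 2\pi\chi(S)<\infty$, while the assumption forces $c(S)>(2\chi(S)-1)\pi>-\infty$, so $c(S)\in\R$ and the structure theory of finite-total-curvature open surfaces becomes available. I would then recast the hypothesis in its geometrically decisive form: writing
$$\alpha:=2\pi\chi(S)-c(S),$$
the inequality $c(S)>(2\chi(S)-1)\pi$ is exactly $\alpha<\pi$. For a finitely connected, one-ended surface of finite total curvature, $\alpha\ge 0$ is the \emph{cone angle at infinity}: if $L(r)$ is the length of the geodesic circle $\partial B_r(p)$ (a single smooth circle for large $r$, since $S$ has one end), then $\lim_{r\to\infty}L(r)/r=\alpha$, and $\alpha$ is also the total angular width of the space of asymptotic ray-directions of the end.

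Second, I would reduce the exhaustion property to an angular estimate. Writing $b_\gamma$ for the Busemann function of a ray $\gamma$, it is an exhaustion exactly when $b_\gamma(x)\to+\infty$ as $x\to\infty$, so it suffices to exclude a sequence $q_i\to\infty$ along which $b_\gamma$ stays bounded above. After passing to a subsequence the minimal geodesics $\overline{pq_i}$ converge to a ray $\sigma$, and the crux is the asymptotic identity
$$b_\gamma(q_i)=d(p,q_i)\cos\theta+o\!\left(d(p,q_i)\right),$$
where $\theta$ is the angle at infinity between $\gamma$ and $\sigma$. I would obtain this, and the bound $\theta\le\alpha/2$, by applying the Gauss-Bonnet theorem to the unbounded sector bounded by $\gamma$ and $\sigma$: a Cohn-Vossen computation on that sector expresses the angle at infinity through its total curvature, and since the ideal boundary of a single end has total width $\alpha$, any two directions are separated by at most $\alpha/2$. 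As $\alpha<\pi$ gives $\theta\le\alpha/2<\pi/2$, we get $\cos\theta>0$ and hence $b_\gamma(q_i)\to+\infty$, a contradiction. Because $\gamma$ was arbitrary, every Busemann function is an exhaustion.

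Third, for the ``in particular'' claim I argue topologically. If $c(S)>\pi$, then $2\pi\chi(S)\ge c(S)>\pi$ forces $\chi(S)>\tfrac12$, hence $\chi(S)\ge 1$ as it is an integer. Being oriented, finitely connected, and one-ended, $S$ is homeomorphic to a closed oriented surface of genus $g$ with one point removed, so $\chi(S)=1-2g\le 1$; with $\chi(S)\ge1$ this gives $g=0$ and $\chi(S)=1$, i.e., $S\approx\R^2$. Then $(2\chi(S)-1)\pi=\pi<c(S)$, so the first part applies and all Busemann functions on $S$ are exhaustions.

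The main obstacle is precisely the angular estimate of the second step: giving rigorous meaning to the ``angle at infinity'' between two rays and proving the cosine asymptotics for $b_\gamma$ together with the uniform bound $\theta\le\alpha/2$. This is where finite total curvature is essential, entering through the Gauss-Bonnet theorem for unbounded sectors (which limits how much the curvature can deflect geodesics near infinity) and through the convergence theory of minimal geodesics and co-rays on such surfaces. Establishing the correct sign in the Busemann asymptotics, uniformly over all escaping sequences, is the delicate point; the remainder is comparison-geometric bookkeeping.
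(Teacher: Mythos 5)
The first thing to note is that the paper does not prove this statement at all --- it is Shiohama's theorem, quoted from \cite{Sh} purely as motivation for the radial-curvature program --- so there is no in-paper proof to compare against; your outline has to be measured against Shiohama's published argument and its exposition in \cite{SST}. In those terms your skeleton follows the standard route: rewriting the hypothesis as $\alpha := 2\pi\chi(S) - c(S) < \pi$, identifying $\alpha$ with the cone angle at infinity via $\lim_{r\to\infty} L(\partial B_r(p))/r = 2\pi\chi(S)-c(S)$, bounding the angle at infinity between any two rays from $p$ by $\alpha/2 < \pi/2$, and concluding linear growth of every Busemann function from the cosine asymptotics. Your third step (the ``in particular'' clause) is complete and correct: $c(S)>\pi$ forces $\chi(S)\ge 1$, while an oriented, finitely connected, one-ended surface has $\chi(S)=1-2g\le 1$, so $g=0$ and $S$ is homeomorphic to $\R^2$, after which the first part applies.

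The genuine gap is the one you yourself flag: the entire analytic content of the theorem sits in the asserted identity $b_\gamma(q_i)=d(p,q_i)\cos\theta+o\left(d(p,q_i)\right)$ together with the bound $\theta\le\alpha/2$, and the proposal proves neither. Two concrete points. First, $\theta\le\alpha/2$ requires knowing that the two sectors into which $\gamma$ and $\sigma$ cut a neighborhood of infinity have angles at infinity summing to $\alpha$; this rests on Gauss--Bonnet for unbounded regions combined with the isoperimetric theorem, i.e., a substantial portion of \cite{Sh} (or of \cite{SST}), not a routine computation. Second, and more delicately, your contradiction argument needs the cosine asymptotics to hold along the sequence $q_i$, not merely along the limit ray $\sigma$: convergence of the minimal geodesics from $p$ to $q_i$ toward $\sigma$ on compact sets does not by itself control $b_\gamma(q_i)$, because $q_i$ may drift away from $\sigma$ at scale comparable to $d(p,q_i)$. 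What is actually needed is a bound uniform over all escaping sequences, of the form $\liminf_{x\to\infty} b_\gamma(x)/d(p,x) \ge \cos(\alpha/2) > 0$, and establishing that uniformity is exactly where the finite-total-curvature hypothesis does its work. So the strategy is the right one and matches the literature, but as written the proposal defers, rather than supplies, the core of the theorem.
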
 
Is it appropriate for us to extend Theorem \ref{Sh} 
to complete noncompact Riemannian manifolds 
of higher dimensions which have all curvatures 
everywhere bounded from below by $0$, 
or $-1$? No, it is not, 
for total curvatures of reference surfaces, 
$\R^2$, or the hyperbolic plane $\bH^2(-1)$,  
in comparison theorems for such manifolds are 
$0$ corresponding to $\R^2$, or 
$-\infty$ corresponding to $\bH^2(-1)$, which 
are less than $\pi$. It is therefore natural that 
we desire {\em radial curvature geometry} to extend 
Huber's finite connectivity theorem, 
Theorem \ref{Sh}, and other classic results that 
we can find in \cite{SST} to higher dimensions, 
because Gaussian curvatures of reference surfaces 
in the radial curvature geometry can wildly change their 
signs so that we can restrict total curvatures of the 
surfaces from being bounded from below by a constant.\par  
We will introduce the radial curvature geometry. 
Let $\wt{M}^{n}$ be a simply connected complete 
$n$-dimensional Riemannian manifold with 
a base point $\tilde{p} \in \wt{M}^{n}$ 
where $n\ge2$, and 
$\tilde{d}$ the distance function of $\wt{M}^n$. 
Set $\ell:= \sup_{\tilde{x}\in \wt{M}^n}\tilde{d}(\tilde{p}, \tilde{x}) \le \infty$ and 
$
\Sph^{n - 1}_{\tilde{p}} 
:=
\{ 
v \in T_{\tilde{p}} \wt{M}^{n} \ | \ \| v \| = 1
\}
$ 
where $T_{\tilde{p}}\wt{M}^n$ denotes the tangent space at $\tilde{p} \in \wt{M}^n$. We then call the pair $(\wt{M}^{n}, \tilde{p})$ the 
{\em general $n$-dimensional comparison space} 
if its Riemannian metric $d\tilde{s}^2$ is expressed 
in terms of geodesic polar coordinates 
around $\tilde{p}$ as 
\begin{equation}\label{polar}
d\tilde{s}^2 = dt^2 + f(t)^2 d\tilde{s}^2_{\Sph^{n-1}_{\tilde{p}}}(\theta)
\end{equation}
for all 
$(t,\theta) \in (0,\ell) \times \Sph_{\tilde{p}}^{n -1}$. 
In Eq.\,\eqref{polar}, 
$f : (0, \ell) \lra \R$ is the warping function 
of $\wt{M}^{n}$, which is, by definition, 
a positive smooth function satisfying the 
Jacobi equation 
\begin{equation}\label{Jacobi}
f''(t) + G (\tilde{\gamma}(t)) f(t) = 0
\end{equation} 
with initial conditions $f(0) = 0$ and $f'(0) = 1$ 
where $G$ denotes the sectional curvatures containing a radial direction of $\wt{M}^{n}$ and $\tilde{\gamma}$ denotes any meridian emanating from 
$\tilde{p} = \tilde{\gamma} (0)$, and $d\tilde{s}_{\Sph^{n-1}_{\tilde{p}}}$ is the Riemannian metric on $
\Sph^{n - 1}_{\tilde{p}}$. 
The function 
$\wt{K}:= G \circ \tilde{\gamma} : [0,\ell) \lra \R$ 
is called 
the {\em radial curvature function} of $\wt{M}^{n}$. In the case of 
$n = 2$ we simply call $(\wt{M}^2, \tilde{p})$ 
the {\em surface of revolution}, and then 
Eq.\,\eqref{polar} is also expressed simply 
as 
\begin{equation}\label{polar_surface}
d\tilde{s}^2 = dt^2 + f(t)^2 d\theta^2
\end{equation} 
for all $(t,\theta) \in (0,\ell) \times \Sph_{\tilde{p}}^1$. 
The total curvature $c(\wt{M}^2)$ of $\wt{M}^2$ is thus 
given by 
\begin{equation}\label{TC_surface}
c(\wt{M}^2)
= \int_{\wt{M}^2} \wt{K}_+\,d\wt{M}^2 
+ \int_{\wt{M}^2} \wt{K}_-\,d\wt{M}^2
\end{equation}
if $\int_{\wt{M}^2} \wt{K}_+\,d\wt{M}^2 < \infty$ or 
$\int_{\wt{M}^2} \wt{K}_-\,d\wt{M}^2 > -\infty$ where 
$\wt{K}_+:= \max\{\wt{K}, 0\}$, 
$\wt{K}_-:= \min\{\wt{K}, 0\}$, and 
$d\wt{M}^2$ is the area element of $\wt{M}^2$, 
i.e., $d\wt{M}^2= f(t) dt d\theta$. In particular 
$\chi (\wt{M}^2)=1$, 
for $\wt{M}^2$ is homeomorphic to $\R^2$. 

\begin{remark}
Katz and the first author 
\cite{KK} classified connected complete Riemannian manifolds with symmetric radial curvature, and hence general comparison spaces were classified. Moreover 
in their energetic study Mao and his collaborators employ 
such spherically symmetric manifolds to develop geometric analysis and inequalities to a wider class 
of metrics, see \cite{FMS}, \cite{Mao1}, \cite{Mao2}, 
and so on. 
\end{remark}
We now say that 
a connected complete $n$-dimensional Riemannian manifold $M$ with a base point $p \in M$ {\em has 
radial curvature at $p$ bounded from below by 
that of a surface of revolution $(\wt{M}^2, \tilde{p})$} if, 
along every unit speed minimal geodesic $\gamma: [0,a) \lra M$ 
emanating from $p = \gamma (0)$, 
\[
K_M(\gamma'(t), v) \ge \wt{K} (t)
\]
holds for all $t \in [0, a)$ and all 
$v \in T_{\gamma (t)}M$ with $v \perp \gamma'(t)$ 
where $K_M(\gamma'(t), v)$ denotes 
the sectional curvature of $M$ restricted to 
the $2$-dimensional linear space 
in $T_{\gamma (t)}M$ spanned by $\gamma'(t)$ 
and $v$, and 
$\wt{K}$ denotes the radial curvature 
function of $\wt{M}^2$. 

\medskip

Our purpose of this article is to extend Huber's 
finite connectivity theorem to higher dimensions in radial curvature geometry, and our main theorem is stated as follows:

\begin{theorem}\label{2019_10_28_maintheorem}
Let $M$ be a connected complete noncompact 
$n$-dimensional Riemannian manifold with 
a base point $p\in M$ whose radial curvature 
at $p$ is bounded from below by that 
of a noncompact surface of revolution 
$(\wt{M}^2, \tilde{p})$ with its metric 
\eqref{polar_surface} where $n\ge 2$. 
Assume $\int_{\wt{M}^2} \wt{K}_-\,d\wt{M}^2 > -\infty$. 
We then have that 
\begin{enumerate}[{\rm (1)}]
\item
$\lim_{t\to\infty}\vol B_t(p) / t^n$ exists where $\vol B_t(p)$ denotes the volume of 
the open metric ball $B_t(p)$ with center $p$ and radius $t$; 
\item 
in addition, if the limit is nonzero, then 
$M$ has finite topological type and 
the number of ends of $M$ is less than or equal 
to $2 \{\lim_{t\to\infty}m'(t) \}^{n-1}$. Here 
$m: (0, \infty)\lra \R$ is a positive function of class $C^r$, 
$r\ge 2$, satisfying the 
Jacobi equation $m''(t) + \wt{K}_- (t) m(t) = 0$ 
with $m(0) = 0$ and $m'(0) = 1$. 
\end{enumerate}
\end{theorem}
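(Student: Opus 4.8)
The plan is to base both assertions on a single radial Bishop-type comparison and then feed it into, respectively, an Abelian averaging argument and Grove--Shiohama critical point theory. Along any unit-speed minimal geodesic $\gamma_\theta(t)=\exp_p(t\theta)$, $\theta\in\Sph^{n-1}_p$, the hypothesis gives $K_M(\gamma_\theta'(t),v)\ge\wt K(t)\ge\wt K_-(t)$ for every $v\perp\gamma_\theta'(t)$. Since $m$ solves $m''+\wt K_-m=0$ with $m''=-\wt K_-m\ge0$, the function $m$ is positive, convex and increasing on $(0,\infty)$, so it has no zeros to obstruct the comparison. Writing $\cA(t,\theta)$ for the volume density of $M$ in geodesic polar coordinates and $c(\theta)$ for the distance to the cut locus in the direction $\theta$, the Riccati comparison applied to the principal curvatures of the geodesic spheres (each bounded by $\mu:=m'/m$, the model Riccati solution) yields that $\cA(t,\theta)/m(t)^{n-1}$ is nonincreasing in $t$ up to $c(\theta)$, and in particular $\cA(t,\theta)\le m(t)^{n-1}$. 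I would first record the ODE lemma that the finiteness of the negative total curvature of the noncompact model forces $m$ to be asymptotically linear, i.e. $\lim_{t\to\infty}m'(t)=:m'(\infty)<\infty$ and $m(t)/t\to m'(\infty)$; this follows from a Wronskian/Gr\"onwall comparison between $m$ and the warping function $f$ and is where the hypothesis $\int_{\wt{M}^2}\wt K_-\,d\wt{M}^2>-\infty$ enters.

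For (1) I would extend the density ratio by zero past the cut locus and set
\[
g(s,\theta):=\mathbf 1_{\{s<c(\theta)\}}\,\frac{\cA(s,\theta)}{m(s)^{n-1}},
\]
which is nonincreasing in $s$ and bounded by $1$; hence $g_\infty(\theta):=\lim_{s\to\infty}g(s,\theta)$ exists for each $\theta$. From
\[
\vol B_t(p)=\int_{\Sph^{n-1}_p}\int_0^{t} g(s,\theta)\,m(s)^{n-1}\,ds\,d\theta
\]
together with $m(s)/s\to m'(\infty)$, an Abelian averaging (Ces\`aro) argument gives $t^{-n}\int_0^t g(s,\theta)m(s)^{n-1}\,ds\to g_\infty(\theta)(m'(\infty))^{n-1}/n$ for each $\theta$, and dominated convergence (domination by the constant $\sup_t t^{-n}\int_0^t m(s)^{n-1}\,ds$) lets me integrate in $\theta$, so
\[
\lim_{t\to\infty}\frac{\vol B_t(p)}{t^{n}}=\frac{(m'(\infty))^{n-1}}{n}\int_{\Sph^{n-1}_p}g_\infty(\theta)\,d\theta
\]
exists, proving (1).

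For (2), assume this limit is positive. To get finite topological type I would use Grove--Shiohama theory: it suffices to show $\Crit(d_p)$ lies in a compact set, after which the isotopy lemma identifies $M$ with the interior of a compact manifold with boundary. I would exclude far-out critical points by a Toponogov-type comparison against the model combined with the large volume growth: positivity of the limit forces the minimal geodesics from a far point $q$ back to $p$ to cluster near a single direction (an excess/angle estimate made possible by the finiteness of the negative total curvature), which contradicts the spread of directions required for criticality. Once $\Crit(d_p)\subset B_{t_0}(p)$, the number of ends of $M$ equals the number of connected components of $\partial B_t(p)$ for every $t>t_0$.

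Finally I would bound that number by an area comparison. Summing the density bound gives $\vol_{\,n-1}\partial B_t(p)\le\omega_{n-1}m(t)^{n-1}$ with $\omega_{n-1}=\vol(\Sph^{n-1})$, while the large volume growth forces each end to be ``fat'': each component $C$ of $\partial B_t(p)$ satisfies $\vol_{\,n-1}(C)\ge\tfrac12\omega_{n-1}t^{n-1}(1+o(1))$ as $t\to\infty$. Dividing and letting $t\to\infty$, using $m(t)/t\to m'(\infty)$, bounds the number of components, hence of ends, by $2(m'(\infty))^{n-1}$. The main obstacles I expect are the two comparison estimates against a model whose curvature changes sign wildly: the exclusion of critical points at infinity and the uniform lower area bound per component. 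Both must convert the merely integral smallness of the negative curvature into pointwise asymptotic control, and this is exactly where the finiteness of the negative total curvature is essential.
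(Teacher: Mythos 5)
There are genuine gaps, one in each half of your argument. First, the ``ODE lemma'' on which your proof of (1) rests is false: the hypothesis $\int_{\wt{M}^2}\wt{K}_-\,d\wt{M}^2>-\infty$ alone does \emph{not} force $\lim_{t\to\infty}m'(t)<\infty$. That hypothesis says $\int_0^\infty|\wt{K}_-(t)|\,f(t)\,dt<\infty$, and this controls $\int_0^\infty t\,|\wt{K}_-(t)|\,dt$ (the quantity that actually governs asymptotic linearity of $m$) only when $f(t)\gtrsim t$, i.e.\ only when $c(\wt{M}^2)<2\pi$; moreover Sturm comparison gives $m\ge f$, so a Wronskian/Gr\"onwall comparison between $m$ and $f$ runs in the wrong direction. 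Concretely, take $f(t)=1+\tfrac12\sin(\log t)$ for $t\ge1$, smoothly capped near $0$ with $f(0)=0$, $f'(0)=1$: then $f''_+(t)=O(t^{-2})$, so $\int_0^\infty|\wt{K}_-|f\,dt=\int_0^\infty f''_+\,dt<\infty$ and the hypothesis of (1) holds (here $c(\wt{M}^2)=2\pi$), yet $\int_0^\infty t|\wt{K}_-(t)|\,dt\ge\tfrac23\int_1^\infty t f''_+(t)\,dt=\infty$, whence $m'(t)=1+\int_0^t|\wt{K}_-|m\,ds\ge1+\int_0^t s|\wt{K}_-(s)|\,ds\to\infty$ (using $m(s)\ge s$). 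In exactly this case your Ces\`aro argument collapses: $m(s)/s\to\infty$ and the bound $\cA\le m^{n-1}$ says nothing about $\vol B_t(p)/t^n$. The paper avoids this by comparing with the $n$-dimensional model built from $f$ itself (Bishop and Bishop--Gromov in radial curvature geometry, \cite{Mao1}), for which the isoperimetric inequality of \cite{SST} gives $2\pi f(t)/t\to 2\pi-c(\wt{M}^2)$ unconditionally; finiteness of $\lim m'(t)$ is recovered only in part (2), \emph{after} positivity of the limit has forced $c(\wt{M}^2)<2\pi$ (Corollary \ref{2019_11_25_Cor3.3}). Your proposal never derives $c(\wt{M}^2)<2\pi$, which is the linchpin of the paper's part (2) as well.

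Second, your ends count fails at the claimed per-component lower bound $\vol_{n-1}(C)\ge\tfrac12\omega_{n-1}t^{n-1}(1+o(1))$. Positivity of $\lim_{t\to\infty}\vol B_t(p)/t^n$ is a statement about \emph{total} volume and is perfectly compatible with one fat end carrying all the growth together with additional thin (e.g.\ cylinder-like) ends whose boundary components have bounded area; nothing in your argument excludes these, so dividing the total area bound by a per-component bound does not count components. (You flag this as an expected obstacle, but it is not a technical refinement that is missing---the stated inequality is simply false in general, so the route must change.) The paper's mechanism is different: for rays $\gamma_{v_{\bf e}},\gamma_{v_{{\bf e}'}}$ from $p$ into distinct ends it proves the angle separation $\angle(v_{\bf e},v_{{\bf e}'})\ge\pi/\lim_{t\to\infty}m'(t)$ via a Toponogov comparison whose reference surface is the auxiliary surface $M^*$ with metric $dt^2+m(t)^2d\theta^2$---the point of replacing $\wt{M}^2$ by $M^*$ being that $\wt{K}_-\le0$ guarantees the half-sector $V^*(\pi)$ is free of cut points, so that the comparison theorem of \cite[Theorem 4.12]{KT1}, in the form used in \cite{KO}, applies---and then concludes by packing disjoint balls of radius $\lambda$ in $\Sph_p^{n-1}$. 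Thin ends are thereby excluded at the level of initial directions at $p$, not of areas at infinity. Likewise, your finite-topological-type step is only a sketch; the paper obtains it immediately from $c(\wt{M}^2)<2\pi$ together with \cite[Theorem 1.3]{KT5}, and without first establishing $c(\wt{M}^2)<2\pi$ your critical-point argument has no quantitative input to run on.
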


\begin{remark}\label{2020_04_11_Rem1.4}
We here give remarks on Theorem 
\ref{2019_10_28_maintheorem} and 
related results to the theorem. 
\begin{enumerate}
\item From Eq.\,\eqref{TC_surface},  
$\int_{\wt{M}^2} \wt{K}_-\,d\wt{M}^2 > -\infty$ if and only if 
$c(\wt{M}^2) > -\infty$. 
\item The assumption 
$\lim_{t\to\infty}\vol B_t(p) / t^n \not = 0$ 
guarantees 
$2 \{\lim_{t\to\infty}m'(t) \}^{n-1}< \infty$. Indeed, 
from the assumption,  
$c(\wt{M}^2) \in (-\infty, 2\pi)$ holds 
(see Corollary \ref{2019_11_25_Cor3.3}), 
and hence the argument in 
the proof of \cite[Lemma 2.2]{KT5} shows 
$\int_0^\infty t \wt{K}_-(t)\,dt >-\infty$, which 
implies $\lim_{t\to\infty}m'(t) < \infty$ 
(see the proof of \cite[Theorem 5.3]{KT1}). Note that 
$m' (t)\ge 1$ on $[0,\infty)$, for $m''(t)= -\wt{K}_-(t)m(t)\ge 0$ and $m'(0)=1$. 
\item As was noted above, our 
radial curvatures can change signs wildly. For example 
there is a noncompact surface of revolution 
admitting a finite total curvature whose 
radial curvature function $\wt{K}$ is not bounded, which 
means $\liminf_{t \to \infty} \wt{K}(t) = - \infty$, or 
$\limsup_{t \to \infty} \wt{K}(t)= \infty$,  
see \cite[Theorem 1.4]{KT5}.
\item The first author and Tanaka \cite{KT1} extended 
Huber's finite connectivity theorem to higher dimensions as follows: Let $M$ be a connected complete noncompact 
$n$-dimensional Riemannian manifold with a base point $p\in M$ whose 
radial curvature at $p$ is bounded from below 
by that of a noncompact surface of revolution 
$(\wt{M}^2, \tilde{p})$ with its metric 
\eqref{polar_surface} where $n \ge2$. If 
\begin{enumerate}[{\rm (a)}]
\item
$\wt{M}^2$ admits a finite total curvature, and if 
\item
$\wt{M}^2$ has no pair of cut points in the set 
$\wt{V}(\delta_{0}) := \{ \tilde{x} \in \wt{M}^2 \, | \, 0 < \theta(\tilde{x}) < \delta_{0} \}$ for some $\delta_{0} \in (0, \pi]$, 
\end{enumerate}
then $M$ has finite topological type.
\item The two assumptions (a) and (b) above can be 
replaced with one assumption 
$c(\wt{M}^2) \in (-\infty, 2\pi)$, 
see \cite[Theorem 1.3]{KT5} of the first author 
and Tanaka.
\item Abresch and Gromoll \cite{AG} also 
showed the finiteness of topological type of 
a complete noncompact $n$-dimensional Riemannian manifold $X$ having not only nonnegative Ricci curvature 
outside the open distance $t_{0}$-ball around $p \in X$ for some constant $t_{0} > 0$, but also sectional curvature everywhere bounded from below by a negative 
constant, and moreover admitting diameter growth of small order $o(t^{1/ n})$. 
Note that we find that the diameter growth is too restrictive 
from the radial curvature geometry point of view, 
see \cite[example 1.1]{KT1}.
\item Abresch \cite{A} obtained an upper bound on the 
number of ends of an asymptotically nonnegatively curved 
manifold, which is a manifold whose radial 
curvature at a base point is bounded from below by the radial curvature function, denoted by $\wt{F}$, 
satisfying $\wt{F} <0$, $\wt{F}' \ge 0$, 
and $\int_0^{\infty}-t\wt{F}(t) \, dt <\infty$. 
Note that these conditions implies that the 
reference surface of such a manifold is an Hadamard 
one admitting a finite total curvature. 
So we can say that our class of metrics in Theorem \ref{2019_10_28_maintheorem} 
are wider than that of 
asymptotically nonnegatively curved metrics. 
\item Hebda and Ikeda showed, by using their Toponogov comparison theorem \cite{HI1}, that if a noncompact surface 
of revolution $(\wt{M}^2, \tilde{p})$ with its metric 
\eqref{polar_surface} has weaker radial attraction than a connected complete noncompact Riemannian manifold $M$ 
with a base point $p\in M$ and minimal geodesics in $M$ have no bad encounters with cut loci in $\wt{M}^2$ in their sense, 
and if 
$\liminf_{t\to\infty}f(t)/t< 2/\pi$, then $M$ has at most one end (\cite[Proposition 7.3]{HI2}).  
\end{enumerate}
\end{remark}

\begin{remark}
Theorem \ref{Sh} had been extended to higher dimensions in radial curvature geometry by the first author and Tanaka \cite{KT2} as follows: Let $M$ be a connected complete noncompact 
$n$-dimensional Riemannian manifold with a base 
point $p\in M$ whose radial curvature at $p$ is 
bounded from below by that of a noncompact von 
Mangoldt surface of revolution $(\wt{M}^2, \tilde{p})$. 
If $c(\wt{M}^2)>\pi$, then all Busemann functions 
of $M$ are exhaustions. Here a von Mangoldt surface 
of revolution is, by definition, a surface of revolution whose 
radial curvature 
function is nonincreasing on $[0,\infty)$. They further 
extended this result to an $M$ which is 
not less curved than a more general surface of revolution, see \cite{KT4}.\end{remark}

\begin{remark}
This article is a part of the master thesis of the second author, Yamaguchi University. 
\end{remark}

\begin{acknowledgement}
We thank the referee for a careful reading of the manuscript, valuable suggestions, and helpful comments on the manuscript, which have improved the presentation of this article.
\end{acknowledgement}

\section{Proof of Theorem \ref{2019_10_28_maintheorem}}\label{sec2}

Throughout this section 
let $M$ be a connected complete noncompact 
$n$-dimensional Riemannian manifold with a base point $p$ whose 
radial curvature at $p$ is 
bounded from below by the radial curvature 
function $\wt{K}$ of a noncompact surface of revolution $(\wt{M}^2, \tilde{p})$, and let $d\tilde{s}^2$ denote 
the Riemannian metric of $\wt{M}^2$ given by 
Eq.\,\eqref{polar_surface} with the warping function 
$f:(0,\infty)\lra \R$ satisfying Eq.\,\eqref{Jacobi}. 
In these settings we assume 
$\int_{\wt{M}^2} \wt{K}_-\,d\wt{M}^2 > -\infty$ where 
$\wt{K}_- (t)= \min \{\wt{K} (t), 0\}$ on $[0,\infty)$. 
The total curvature $c(\wt{M}^2)$ of $\wt{M}^2$ is then finite, see 
the first one of Remark \ref{2020_04_11_Rem1.4}. 
In particular 
Cohn-Vossen's theorem gives 
$c(\wt{M}^2)\in (-\infty, 2\pi]$, 
for $\chi (\wt{M}^2)=1$.\par 
Let $(\wt{M}^n, \tilde{o})$ be 
the noncompact general
$n$-dimensional comparison space 
whose warping function is $f$ of $\wt{M}^2$ 
where $\tilde{o} \in \wt{M}^n$ denotes the base point 
of it. Note that the radial curvature of $M$ at $p$ is also 
bounded from below by that 
of $(\wt{M}^n, \tilde{o})$. Let 
$B_t (\tilde{o}) \subset \wt{M}^n$ be 
the metric open ball with center $\tilde{o}$ 
and radius $t$, and $\omega_{n-1}$ 
the volume of 
$
\Sph^{n-1}_{\tilde{o}}
=
\{v \in T_{\tilde{o}} \wt{M}^n\,|\, \|v\|=1 
\}$. 

\begin{lemma}\label{2019_10_28_lem2.1}
$\lim_{t\to\infty}\vol B_t(p) / t^n$ exists. 
\end{lemma}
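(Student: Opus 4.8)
The plan is to write $\vol B_t(p)/t^n$ as a product of two factors that converge separately: a Bishop--Gromov ratio comparing $M$ with the model $(\wt M^n,\tilde o)$, and the normalised volume growth of the model itself. Concretely, I would prove that
\[
\frac{\vol B_t(p)}{t^n}
=\frac{\vol B_t(p)}{\vol B_t(\tilde o)}\cdot\frac{\vol B_t(\tilde o)}{t^n},
\qquad
\vol B_t(\tilde o)=\omega_{n-1}\int_0^t f(s)^{n-1}\,ds,
\]
with each factor admitting a finite limit as $t\to\infty$; the lemma then follows from the product rule for limits.

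For the first factor I would set up geodesic polar coordinates around $p$ and write the volume element of $M$ as $\sqrt{g}(s,v)\,ds\,dv$ for $(s,v)\in(0,\infty)\times\Sph^{n-1}_p$, with the convention $\sqrt{g}(s,v)=0$ once $s$ exceeds the cut distance in the direction $v$. Along each unit-speed minimal geodesic $\gamma_v(s)=\exp_p(sv)$ the hypothesis gives $K_M(\gamma_v'(s),w)\ge\wt K(s)$ on every normal plane, hence the radial Ricci bound $\Ric(\gamma_v'(s),\gamma_v'(s))\ge(n-1)\wt K(s)$. Feeding this into the Riccati comparison for the mean curvature of the geodesic spheres, and recalling that $f$ solves \eqref{Jacobi}, one obtains that $s\mapsto\sqrt{g}(s,v)/f(s)^{n-1}$ is nonincreasing with limit $1$ as $s\to0^+$. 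Averaging over $v\in\Sph^{n-1}_p$ by Gromov's lemma (the quotient of the integrals of a pointwise nonincreasing family is itself nonincreasing) shows that $t\mapsto\vol B_t(p)/\vol B_t(\tilde o)$ is nonincreasing on $(0,\infty)$; since it tends to $1$ as $t\to0^+$ it stays in $[0,1]$ and therefore converges to some $L\in[0,1]$.

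For the second factor I would use that the finiteness of $c(\wt M^2)$, already established just before the statement, forces $f$ to grow linearly. Indeed $\int_0^\infty|\wt K(t)|f(t)\,dt<\infty$ together with $f''=-\wt K f$ puts $f'$ in $f'(0)+L^1(0,\infty)$, so $\lim_{t\to\infty}f'(t)$ exists and equals $A:=1-c(\wt M^2)/(2\pi)$, which is $\ge0$ because $c(\wt M^2)\le2\pi$ by Cohn--Vossen; by l'H\^opital $\lim_{t\to\infty}f(t)/t=A$ as well (see also \cite{SST}). A Stolz--Ces\`aro argument then gives $\lim_{t\to\infty}\vol B_t(\tilde o)/t^n=\omega_{n-1}A^{n-1}/n$, so that $\lim_{t\to\infty}\vol B_t(p)/t^n=L\,\omega_{n-1}A^{n-1}/n$, a finite nonnegative number. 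The step I expect to be the main obstacle is the volume comparison of the second paragraph: the hypothesis controls only sectional curvatures of planes containing the radial direction, so the Bishop-type density monotonicity must be run purely from the radial Ricci bound along minimal geodesics, and care is needed to handle directions $v$ with finite cut distance (where $\sqrt{g}(s,v)$ vanishes past the cut point) in such a way that the averaged ratio remains monotone.
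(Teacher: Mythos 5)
Your proof is correct and takes essentially the same route as the paper: the same factorization $\vol B_t(p)/t^n = \bigl(\vol B_t(p)/\vol B_t(\tilde o)\bigr)\cdot\bigl(\vol B_t(\tilde o)/t^n\bigr)$, with the first factor handled by Bishop--Gromov monotonicity (limit in $[0,1]$) and the second reduced, via l'H\^opital/Stolz--Ces\`aro, to $\lim_{t\to\infty}f(t)/t = 1 - c(\wt{M}^2)/(2\pi)$, which is finite and nonnegative by Cohn--Vossen. The only difference is one of self-containedness, not method: where the paper cites the volume comparison theorems of \cite{BC} and \cite{Mao1} and the isoperimetric identity of \cite{SST}, you prove these ingredients directly (radial Ricci/Riccati comparison with the cut-locus convention, and integration of the Jacobi equation $f''=-\wt{K}f$ against $\int_0^\infty|\wt{K}|f\,dt<\infty$), and both arguments are sound.
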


\begin{proof}
Since $\vol B_t (\tilde{o}) = \omega_{n-1} \int_0^t f(r)^{n-1}dr$, we have 
\begin{equation}\label{2019_10_28_lem2.1_1}
\lim_{t\to\infty}\frac{\vol B_t(p)}{t^n} 
= 
\omega_{n-1} \cdot \lim_{t\to\infty}
\frac{\vol B_t(p)}{\vol B_t (\tilde{o})} 
\cdot 
\frac{\int_0^t f(r)^{n-1}dr}{t^n}.
\end{equation}
The Bishop volume comparison theorem \cite{BC} 
and the Bishop--Gromov one 
in radial curvature geometry \cite{Mao1} show 
that 
$\lim_{t\to\infty}
\vol B_t(p) / \vol B_t (\tilde{o})
$ exists in $[0,1]$. 
Moreover since $f$ is smooth and 
$\lim_{t\to\infty}t^{n-1}= \infty$, 
it follows from l'H\^opital's theorem (cf.\,\cite[Lemma 5.2.1]{SST}) 
and the isoperimetric inequality (\cite[Theorem 5.2.1 (5.2.2)]{SST}) that 
\begin{align}\label{2019_10_28_lem2.1_2}
\lim_{t\to\infty} \frac{\int_0^t f(r)^{n-1}dr}{t^n} 
&= \lim_{t\to\infty} \frac{f(t)^{n-1}}{n \cdot t^{n-1}} 
= \frac{1}{n \cdot (2\pi)^{n-1}}\lim_{t\to\infty} 
\Big\{
\frac{2 \pi f(t)}{t} 
\Big\}^{n-1}\\[1mm]
&=\frac{1}{n \cdot (2\pi)^{n-1}} 
\{
2\pi -c(\wt{M}^2)
\}^{n-1} 
=\frac{1}{n} \Big\{
1 -\frac{c(\wt{M}^2)}{2\pi}
\Big\}^{n-1}.\notag
\end{align}
We therefore see, by Eqs.\,\eqref{2019_10_28_lem2.1_1} and \eqref{2019_10_28_lem2.1_2}, that 
$\lim_{t\to\infty}\vol B_t(p) / t^n \in [0,\infty)$, 
for $c(\wt{M}^2) \in (-\infty, 2\pi]$.$\qedd$ 
\end{proof}

In addition we now assume 
$\lim_{t\to\infty} \vol B_t(p) / t^n \not =0$.

\begin{lemma}\label{2019_11_01_lem2.2}
$M$ has finite topological type. 
\end{lemma}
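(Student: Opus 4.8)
The plan is to reduce Lemma \ref{2019_11_01_lem2.2} to the finite-topological-type theorem \cite[Theorem 1.3]{KT5} of Kondo and Tanaka recorded in Remark \ref{2020_04_11_Rem1.4}(5). First I would extract from the computation already performed in Lemma \ref{2019_10_28_lem2.1} the sharp consequence of the added nonvanishing hypothesis. In Eqs.\,\eqref{2019_10_28_lem2.1_1} and \eqref{2019_10_28_lem2.1_2} the limit $\lim_{t\to\infty}\vol B_t(p)/t^n$ appears as a product one of whose factors is $\{1 - c(\wt{M}^2)/(2\pi)\}^{n-1}$, and by Cohn--Vossen's bound $c(\wt{M}^2) \le 2\pi$ this factor is nonnegative and vanishes exactly when $c(\wt{M}^2) = 2\pi$. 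Hence the standing assumption $\lim_{t\to\infty}\vol B_t(p)/t^n \neq 0$ forces $c(\wt{M}^2) < 2\pi$ strictly, consistently with Corollary \ref{2019_11_25_Cor3.3}. Together with $c(\wt{M}^2) > -\infty$ from Remark \ref{2020_04_11_Rem1.4}(1), this places the total curvature in $(-\infty, 2\pi)$, which is precisely the hypothesis of \cite[Theorem 1.3]{KT5}; invoking that theorem yields finite topological type at once.

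If instead a self-contained argument were wanted, I would run the Grove--Shiohama critical point theory \cite{GS} for the distance function $d(p,\,\cdot\,)$ directly. By the Isotopy Lemma, if $d(p,\,\cdot\,)$ has no critical point outside some metric ball $B_{t_0}(p)$ with $t_0$ a regular value, then $M \setminus B_{t_0}(p)$ is homeomorphic to the collar $\partial B_{t_0}(p) \times [t_0, \infty)$, whence $M$ is homeomorphic to the interior of a compact manifold with boundary and so has finite topological type. The problem therefore reduces to confining every critical point of $d(p,\,\cdot\,)$ to a bounded ball.

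The hard part will be producing, for each point $q$ with $d(p,q)$ large, an escape direction $v \in T_q M$ making angle strictly greater than $\pi/2$ with every minimal geodesic from $q$ to $p$, since such a $v$ witnesses that $q$ is noncritical. This is exactly where the bound $c(\wt{M}^2) < 2\pi$ is indispensable: it is equivalent to a positive total angle at infinity $2\pi - c(\wt{M}^2) > 0$ for the model $\wt{M}^2$ (the asymptotic ratio of the length of a geodesic circle to its radius being $2\pi - c(\wt{M}^2)$ by the isoperimetric equality used in Lemma \ref{2019_10_28_lem2.1}), and a Toponogov-type angle comparison in radial curvature geometry transfers this positive opening to $M$ to supply the escape direction uniformly for all sufficiently distant $q$. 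The genuinely technical ingredient, which is what \cite{KT1} and \cite{KT5} must control, is the cut locus of the noncompact model surface of revolution entering such a comparison; this is the obstacle I would expect to absorb most of the work in a from-scratch proof.
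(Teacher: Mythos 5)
Your first paragraph is exactly the paper's proof: the nonvanishing of $\lim_{t\to\infty}\vol B_t(p)/t^n$, combined with the product formula from Eqs.\,\eqref{2019_10_28_lem2.1_1} and \eqref{2019_10_28_lem2.1_2} and Cohn--Vossen's bound, forces $c(\wt{M}^2)\in(-\infty,2\pi)$, and then \cite[Theorem 1.3]{KT5} gives finite topological type --- the paper just phrases the same deduction with auxiliary constants $\alpha_i,\beta_i(n)$. Your remaining paragraphs sketching a from-scratch Grove--Shiohama argument are only commentary (the technical content is deferred to \cite{KT1}, \cite{KT5}, as you acknowledge), so the proposal stands or falls with the first paragraph, which is correct and matches the paper.
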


\begin{proof}
By the additional assumption 
there are two positive constants 
$\alpha_1, \alpha_2$ ($\alpha_1 < \alpha_2$) such that 
$\lim_{t\to\infty}\vol B_t(p) / t^n \in [\alpha_1,\alpha_2]$. From this,  
Eqs.\,\eqref{2019_10_28_lem2.1_1} and \eqref{2019_10_28_lem2.1_2} show 
\begin{equation}\label{2019_11_01_lem2.2_1}
\alpha_1\le 
\frac{\omega_{n-1} }{n} \cdot
\Big\{
1 -\frac{c(\wt{M}^2)}{2\pi}
\Big\}^{n-1}
\cdot \lim_{t\to\infty}
\frac{\vol B_t(p)}{\vol B_t (\tilde{o})} 
\le \alpha_2.
\end{equation}
Since $\alpha_1 >0$, $\lim_{t\to\infty}
\vol B_t(p) / \vol B_t (\tilde{o})$ is positive, 
and hence, by Eq.\,\eqref{2019_11_01_lem2.2_1}, 
we obtain two positive constants 
$\beta_1 (n), \beta_2 (n)$ given by 
\[
\beta_i (n) 
:= 
\frac{n \cdot \alpha_i}{
\omega_{n-1} \cdot 
\displaystyle{
\lim_{t\to\infty}
\frac{\vol B_t(p)}{\vol B_t (\tilde{o})} 
}
}
\]
for each $i=1,2$ such that 
\begin{equation}\label{2019_11_01_lem2.2_2}
\beta_1 (n) 
\le \Big\{
1 -\frac{c(\wt{M}^2)}{2\pi}
\Big\}^{n-1}
\le \beta_2 (n).
\end{equation}
Eq.\,\eqref{2019_11_01_lem2.2_2} thus implies that 
$c(\wt{M}^2)$ is the finite total curvature less than $2\pi$. 
It therefore follows from \cite[Theorem 1.3]{KT5} that 
$M$ has finite topological type. 
$\qedd$ 
\end{proof}

From the argument in the proof of Lemma \ref{2019_11_01_lem2.2} we have the following 
corollary. 

\begin{corollary}\label{2019_11_25_Cor3.3}
$c(\wt{M}^2) \in (-\infty, 2\pi)$. 
\end{corollary}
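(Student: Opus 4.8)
The plan is simply to read off the strict upper bound from the inequality chain already established. Recall that the standing assumption $\int_{\wt{M}^2}\wt{K}_-\,d\wt{M}^2 > -\infty$ is, by the first item of Remark \ref{2020_04_11_Rem1.4}, equivalent to $c(\wt{M}^2) > -\infty$, and that Cohn--Vossen's theorem (using $\chi(\wt{M}^2) = 1$) gives $c(\wt{M}^2) \in (-\infty, 2\pi]$. Thus the only thing left to prove is that the degenerate endpoint $c(\wt{M}^2) = 2\pi$ cannot occur under the additional assumption $\lim_{t\to\infty}\vol B_t(p)/t^n \neq 0$ in force in this part of the section.

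To exclude it, I would invoke Eq.\,\eqref{2019_11_01_lem2.2_2} from the proof of Lemma \ref{2019_11_01_lem2.2}, namely
\[
\beta_1(n) \le \Big\{ 1 - \frac{c(\wt{M}^2)}{2\pi} \Big\}^{n-1} \le \beta_2(n),
\]
together with the fact that $\beta_1(n)$ is \emph{strictly} positive. Indeed $\beta_1(n)$ is a positive multiple of $\alpha_1 > 0$, the denominator $\lim_{t\to\infty}\vol B_t(p)/\vol B_t(\tilde{o})$ having already been shown positive in that same proof. Consequently $\{1 - c(\wt{M}^2)/2\pi\}^{n-1} \ge \beta_1(n) > 0$, so this $(n-1)$-th power is nonzero. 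Since $n - 1 \ge 1$, a real $(n-1)$-th power vanishes precisely when its base does; hence $1 - c(\wt{M}^2)/2\pi \neq 0$, i.e.\ $c(\wt{M}^2) \neq 2\pi$. Combining this with the Cohn--Vossen upper bound $c(\wt{M}^2) \le 2\pi$ gives $c(\wt{M}^2) < 2\pi$, and with $c(\wt{M}^2) > -\infty$ we conclude $c(\wt{M}^2) \in (-\infty, 2\pi)$.

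There is essentially no obstacle here: the corollary is just the observation that the extra volume-growth hypothesis forces a strictly positive lower bound $\beta_1(n)$ on $\{1 - c(\wt{M}^2)/2\pi\}^{n-1}$, which is incompatible with the value $c(\wt{M}^2) = 2\pi$ that Cohn--Vossen's inequality alone would have permitted. The only point deserving a word of care is the passage from positivity of the power to positivity of its base, but this is immediate because Cohn--Vossen already pins $1 - c(\wt{M}^2)/2\pi \ge 0$, so nonvanishing of the base automatically upgrades to strict positivity.
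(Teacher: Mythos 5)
Your proof is correct and is essentially the paper's own argument: the paper likewise deduces the corollary from Eq.\,\eqref{2019_11_01_lem2.2_2} in the proof of Lemma \ref{2019_11_01_lem2.2}, where the strict positivity of $\beta_1(n)$ combined with Cohn--Vossen's bound $c(\wt{M}^2)\le 2\pi$ forces $c(\wt{M}^2)<2\pi$. You have merely spelled out the (correct) elementary step from nonvanishing of the $(n-1)$-th power to nonvanishing of its base, which the paper leaves implicit.
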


We finally estimate an upper bound on the number of ends of $M$: An end of $M$ is, by definition, an equivalence class of cofinal curves on $M$. Here two curves $\alpha, \beta :[0, \infty)\lra M$ are said to be {\em cofinal} if for any compact set $D\subset M$ there is a number $t_0 >0$ such that if $t_1, t_2 \ge t_0$, then $\alpha (t_1)$ and $\beta(t_2)$ are contained in the same connected component of $M \setminus D$. Remark that for any point $x \in M$ 
every end of $M$ contains a ray emanating from $x$. Let 
${\rm Ends}(M)$ denote the set of all ends of $M$. 

\begin{lemma}\label{2019_11_09_lem3.3}
$\#{\rm Ends}(M) \le 2 \{\lim_{t\to\infty}m'(t) \}^{n-1}$ where 
$m: (0, \infty)\lra \R$ is a positive $C^r$-function satisfying the 
Jacobi equation $m''(t) + \wt{K}_- (t) m(t) = 0$ 
with $m(0) = 0$ and $m'(0) = 1$ ($r\ge 2$). 
\end{lemma}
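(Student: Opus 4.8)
The plan is to realize each end by a ray emanating from $p$, to show that rays representing distinct ends must leave $p$ in widely separated directions, and finally to bound the number of such directions by a packing estimate on $\Sph^{n-1}_{p}$. First I would invoke the fact (recalled just before the statement) that every end of $M$ contains a ray from $p$, and choose one representative ray $\gamma_i : [0,\infty)\lra M$, $\gamma_i(0)=p$, per end. Since $M$ has finite topological type by Lemma \ref{2019_11_01_lem2.2}, there is an $R>0$ for which the unbounded components of $M\setminus B_R(p)$ are in bijection with ${\rm Ends}(M)$ and each $\gamma_i$ eventually stays in the component attached to its own end. Thus for $i\ne j$ and all large $t$ the points $\gamma_i(t),\gamma_j(t)$ lie in different components of $M\setminus B_R(p)$, so any curve joining them must meet $\overline{B_R(p)}$; as $\gamma_i(t),\gamma_j(t)$ are both at distance $t$ from $p$, this forces $\dist(\gamma_i(t),\gamma_j(t))\ge 2(t-R)$, whence $\liminf_{t\to\infty}\dist(\gamma_i(t),\gamma_j(t))/t\ge 2$.

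Next, set $L:=\lim_{t\to\infty}m'(t)$, which is finite with $L\ge 1$ by Remark \ref{2020_04_11_Rem1.4}(2). Since $\wt{K}_-\le\wt{K}$, the radial curvature of $M$ at $p$ is also bounded below by that of the general $n$-dimensional comparison space $(\wt{M}^n_-,\tilde{o}_-)$ whose warping function is $m$; because $m''=-\wt{K}_-m\ge 0$, this model has nonpositive sectional curvature, so it is a Cartan--Hadamard manifold whose meridians from $\tilde{o}_-$ are rays free of cut points. I would then apply the Toponogov hinge comparison in radial curvature geometry (cf. \cite{HI1}) to the hinge at $p$ spanned by $\gamma_i|_{[0,t]}$ and $\gamma_j|_{[0,t]}$ with angle $\theta_{ij}:=\angle(\gamma_i'(0),\gamma_j'(0))$, obtaining $\dist(\gamma_i(t),\gamma_j(t))\le\tilde{d}(\tilde{\gamma}_i(t),\tilde{\gamma}_j(t))$, where $\tilde{\gamma}_i,\tilde{\gamma}_j$ are meridians of $\wt{M}^n_-$ issuing from $\tilde{o}_-$ at the same angle $\theta_{ij}$.

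The connecting minimizer realizing $\tilde{d}(\tilde{\gamma}_i(t),\tilde{\gamma}_j(t))$ stays in the totally geodesic $2$-dimensional meridian slice through $\tilde{\gamma}_i'(0),\tilde{\gamma}_j'(0)$, which is the surface of revolution $dt^2+m(t)^2\,d\phi^2$ with the two points at radius $t$ and longitude separation $\theta_{ij}$. Using $m(t)/t\to L$, this surface is asymptotic to the flat cone of total angle $2\pi L$, so I expect $\lim_{t\to\infty}\tilde{d}(\tilde{\gamma}_i(t),\tilde{\gamma}_j(t))/t=2\sin(\tfrac12\min\{L\theta_{ij},\pi\})$. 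Combining the Toponogov bound with Paragraph 1 gives $\lim_{t\to\infty}\tilde{d}(\tilde{\gamma}_i(t),\tilde{\gamma}_j(t))/t\ge 2$, hence $2\sin(\tfrac12\min\{L\theta_{ij},\pi\})\ge 2$, which forces $L\theta_{ij}\ge\pi$. Therefore the initial directions $\gamma_i'(0)\in\Sph^{n-1}_p$ are pairwise separated by angle at least $\pi/L$.

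Finally, the open caps of angular radius $\pi/(2L)$ about the $\gamma_i'(0)$ in $\Sph^{n-1}_p$ are pairwise disjoint, so $\#{\rm Ends}(M)$ is at most $\omega_{n-1}$ divided by the volume of one such cap; an elementary estimate of the cap volume yields $\#{\rm Ends}(M)\le 2L^{n-1}$ (for $n=2$ this is just the trivial bound $\lfloor 2L\rfloor\le 2L$ for points on a circle of circumference $2\pi$). I expect the main obstacle to lie in Paragraphs 2 and 3: one must verify that the radial-curvature hinge comparison really applies with the variable, merely nonpositively curved reference $\wt{M}^n_-$, so that no pathological cut-locus behaviour of the model can spoil the inequality $\dist\le\tilde{d}$, and one must promote the heuristic ``asymptotic cone'' computation of $\lim_{t\to\infty}\tilde{d}/t$ to a rigorous limit for a surface of revolution that is only asymptotically conical. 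By comparison, the packing step and the elementary cap-volume inequality $\omega_{n-1}/\vol(\text{cap of radius }\pi/(2L))\le 2L^{n-1}$ are routine.
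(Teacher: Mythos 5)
Your proposal is correct and follows essentially the same route as the paper's proof: represent each end by a ray from $p$, compare against the model built from $\wt{K}_-$ (whose nonpositive curvature is exactly what removes the cut-locus obstruction to the Toponogov comparison), deduce that initial directions of rays in distinct ends are separated by angle at least $\pi/\lim_{t\to\infty}m'(t)$, and finish with the packing bound on $\Sph_p^{n-1}$, which gives $2\{\lim_{t\to\infty}m'(t)\}^{n-1}$. The two steps you flag as the main obstacles are precisely the ones the paper disposes of by citation: the validity of the comparison theorem for the $C^2$, nonpositively curved reference is covered by the proof of \cite[Theorem 4.12]{KT1}, and the linear-divergence-plus-asymptotic-cone argument yielding the angle bound is carried out in the proof of \cite[Theorem C]{KO}.
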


\begin{proof}Since $M$ is complete noncompact, $M$ has at least one end, and hence we can assume, for our purpose, 
$\#{\rm Ends}(M)\ge 2$. 
For each ${\bf e}\in {\rm Ends}(M)$ choose 
$v_{\bf e} \in \Sph_p^{n-1}:=\{v \in T_pM\,|\,\|v\|=1\}$ 
which is the initial velocity vector 
of the ray $\gamma_{v_{\bf e}}:[0,\infty)\lra M$ 
emanating from $\gamma_{v_{\bf e}}(0)=p$ 
such that $\gamma_{v_{\bf e}} \in {\bf e}$. 
There is then a number $\lambda > 0$ 
such that for any distinct ends ${\bf e}, {\bf e}'\in {\rm Ends}(M)$
the open balls 
$\B_\lambda (v_{\bf e}), \B_\lambda (v_{{\bf e}'})
\subset \Sph_p^{n-1}$ with centers $v_{\bf e}, v_{{\bf e}'}$ and the same 
radius $\lambda$, respectively, are mutually disjoint. In order to get the $\lambda$ we need the Toponogov comparison 
theorem (TCT) in radial curvature geometry. 
All geodesic triangles of TCT in such a geometry 
must have the base point as one of their vertices. Moreover if for some $\delta \in (0,\pi]$ 
the sector $\wt{V}(\delta):=\{x \in \wt{M}^2\,|\,0<\theta(x) < \delta\}$ has no pair of cut points, then, via TCT, we can 
compare the angle, which is less than $\delta$, 
at $p$ of a geodesic triangle on $M$ with that at $\tilde{p}$ of a comparison one on $\wt{V}(\delta)$, 
see \cite[Theorem 4.12]{KT1}. 
Since, in our case, we need to measure the 
angle $\angle (v_{\bf e}, v_{{\bf e}'})$ between $v_{\bf e}$ 
and $v_{{\bf e}'}$ for any distinct ends ${\bf e}, {\bf e}'\in {\rm Ends}(M)$ and show 
$\B_\lambda (v_{\bf e}) \cap \B_\lambda (v_{{\bf e}'})= \emptyset$, i.e., $\angle (v_{\bf e}, v_{{\bf e}'})\ge 2\lambda$ for some $\lambda$, 
$\wt{V}(\pi)$ must be free from cut points. 
However we do not know 
if $\wt{V}(\pi)$ is free from them. 
We thus employ a noncompact surface of revolution $(M^{*}, p^{*})$ with a metric $g^*:= dt^2 + m(t)^2 d\theta^2$, $(t,\theta) \in (0,\infty) \times \Sph_{p^*}^1$, 
where $m: (0, \infty)\lra \R$ is a positive $C^r$-function satisfying the 
Jacobi equation $m''(t) + \wt{K}_- (t) m(t) = 0$ 
with $m(0) = 0$ and $m'(0) = 1$, 
and we set $\Sph_{p^*}^1:=\{v \in T_{p^*}M^*\,|\,\|v\|=1\}$. 
Note that $r\ge 2$, and that the radial curvature of $M$ at $p$ is bounded from 
below by $\wt{K}_-$, for $\wt{K} \ge \wt{K}_-$ on $[0,\infty)$. 
Moreover, since $g^*$ is at least of class $C^2$ and $\wt{K}_-\le 0$, 
we see that 
\begin{itemize}
\item 
the sector $V^*(\pi):=\{x^*\in M^*\,|\,0<\theta(x^*) < \pi\}$ has no pair of cut points; 
\item the proof of a new type of TCT (\cite[Theorem 4.12]{KT1}) 
works for our case,
\end{itemize}
which are the reason why $(M^{*}, p^{*})$ is employed. 
Hence we have a new type of TCT for $(M, p)$ whose 
reference surface is $(M^{*}, p^{*})$, and can 
apply TCT to all geodesic triangles with $p$ 
as one of their vertices on $M$. 
In the same manner as in the proof of \cite[Theorem C]{KO}, 
we can see that 
\begin{equation}\label{2019_11_09_lem3.3_7}
\angle (v_{\bf e}, v_{{\bf e}'})\ge 2\lambda:=
\frac{2\pi^2}{2\pi -c(M^*)}=\frac{\pi}{\lim_{t\to\infty} m'(t)}
\end{equation}
where $c(M^*)$ denotes the total curvature of $M^*$. Note that 
Corollary \ref{2019_11_25_Cor3.3} gives 
$1\le \lim_{t\to\infty} m'(t) < \infty$, see the second one of Remark 
\ref{2020_04_11_Rem1.4}. The packing lemma and 
Eq.\,\eqref{2019_11_09_lem3.3_7} show
$
\#{\rm Ends}(M) =\# \{ \B_\lambda (v_{\bf e}) \}_{{\bf e} \in {\rm Ends}(M)} \le 2(\pi / 2\lambda)^{n-1} 
= 2 \left\{\lim_{t\to\infty}m'(t) \right\}^{n-1}$, 
which is the desired assertion in this lemma.
$\qedd$
\end{proof}

Lemmas \ref{2019_10_28_lem2.1}--\ref{2019_11_09_lem3.3} complete the proof of Theorem \ref{2019_10_28_maintheorem}.$\qedd$

\addcontentsline{toc}{section}{References}

\begin{flushleft}
K.\,Kondo\\
Department of Mathematics, Faculty of Science\\ Okayama University, Okayama City, Okayama 700-8530, Japan\\
{\small e-mail: 
{\tt keikondo@math.okayama-u.ac.jp}}

\medskip

Y.\,Shinoda\\
Division of Mathematics and Physics\\ 
Graduate School of Natural Science and Technology\\
Okayama University, Okayama City, 
Okayama 700-8530, Japan\\
{\small e-mail: 
{\tt pr648hxt@s.okayama-u.ac.jp}}

\end{flushleft}


\begin{thebibliography}{M}
{\small 
\bibitem{A}
U.\,Abresch, 
{\em Lower curvature bounds, 
Toponogov's theorem, and bounded topology}, 
Ann. Sci. \'Ecole Norm. Sup. (4) \textbf{18} (1985), 
no. 4, 651--670. 

\bibitem{AG}
U.\,Abresch and D.\,Gromoll, 
{\em On complete manifolds with nonnegative Ricci curvature}, 
J. Amer. Math. Soc.  
\textbf{3} (1990), No.\,2, 355--374.

\bibitem{BC}
R.L.\,Bishop and R.J.\,Crittenden, 
Geometry of manifolds, 
Pure and Applied Mathematics, Vol. \textbf{XV}, 
Academic Press, New York--London, 1964.

\bibitem{CG2}
J.\,Cheeger and D.\,Gromoll, {\em 
On the structure of complete manifolds of nonnegative curvature}, Ann. of Math. (2) \textbf{96} (1972), 413--443.

\bibitem{CV1}
S.\,Cohn-Vossen, 
{\em 
K\"urzeste Wege und Totalkr\"ummung auf Fl\"achen}, 
Compositio Math. \textbf{2}  (1935), 69--133.

\bibitem{FMS}
P.\,Freitas, J.\,Mao, and I.\,Salavessa, 
{\em Spherical symmetrization and the first eigenvalue of geodesic disks on manifolds}, 
Calc. Var. Partial Differential Equations \textbf{51} (2014), no. 3-4, 701--724. 

\bibitem{G}
M.\,Gromov, 
{\em Curvature, diameter and Betti numbers}, 
Comment. Math. Helv. \textbf{56} (1981), no. 2, 179--195. 

\bibitem{GS}
K.\,Grove and K.\,Shiohama, {\em A generalized sphere theorem},
Ann.\ of Math. (2) \textbf{106} (1977), 201--211.

\bibitem{HI1} J.J.\,Hebda and Y.\,Ikeda, 
{\em Replacing the lower curvature bound in Toponogov's
comparison theorem by a weaker hypothesis}, Tohoku Math. J. (2) \textbf{69} (2017), no.~2, 305--325.

\bibitem{HI2} J.J.\,Hebda and Y.\,Ikeda, 
{\em Necessary and sufficient conditions for a triangle comparison 
theorem}, {\tt arXiv:1806.04633}.

\bibitem{Hub}
A.\,Huber, {\em On subharmonic functions and differential geometry in the large}, 
Comment. Math. Helv. \textbf{32} (1957), 13--72. 

\bibitem{KK}
N.N.\,Katz and K.\,Kondo, 
{\em Generalized space forms}, 
Trans. Amer. Math. Soc. \textbf{354} (2002), 
2279--2284. 

\bibitem{KO}
K.\,Kondo and S.\,Ohta, 
{\em Topology of complete manifolds with radial curvature bounded from below}, 
Geom. Funct. Anal. \textbf{17} (2007), no. 4, 1237--1247. 

\bibitem{KT1}
K.\,Kondo and M.\,Tanaka, 
{\em Total curvatures of model surfaces control topology of complete open manifolds with radial curvature bounded below.\,II}, 
Trans. Amer. Math. Soc. \textbf{362} (2010),  
6293--6324.

\bibitem{KT2}
K.\,Kondo and M.\,Tanaka, 
{\em Total curvatures of model surfaces control topology of complete open manifolds with radial curvature bounded below.\,I}, 
Math. Ann. \textbf{351} (2011), 251--266.　

\bibitem{KT4}
K.\,Kondo and M.\,Tanaka, 
{\em Total curvatures of model surfaces control topology of complete open manifolds with radial curvature bounded below.\,III}, 
J. Math. Soc. Japan \textbf{64} (2012), 
185--200.　

\bibitem{Mao1}
J.\,Mao, {\em Volume comparison theorems for manifolds with radial curvature bounded}, 
Czechoslovak Math. J. \textbf{66 (141)} (2016), no. 1, 71--86. 

\bibitem{Mao2}
J.\,Mao, {\em 
The Gagliardo-Nirenberg inequalities and manifolds with non-negative weighted Ricci curvature}, 
Kyushu J. Math. \textbf{70} (2016), no. 1, 29--46. 

\bibitem{Sh}
K.\,Shiohama, {\em The role of total curvature on complete noncompact Riemannian $2$-manifolds},
Illinois J.\,Math. \textbf{28} (1984), 597--620.

\bibitem{SST}
K.\,Shiohama, T.\,Shioya, and M.\,Tanaka, 
The Geometry of Total Curvature on Complete Open Surfaces, 
Cambridge tracts in mathematics \textbf{159}, 
Cambridge University Press, Cambridge, 2003.

\bibitem{KT5}
M.\,Tanaka and K.\,Kondo, 
{\em The topology of an open manifold with radial curvature bounded from below by a model surface with finite total curvature and examples of model surfaces},  Nagoya Math. J. \textbf{209} (2013), 23--34.
}
\end{thebibliography}
\end{document}